\documentclass[oneside, 11pt]{amsart}
\usepackage{amsmath}
\usepackage{amssymb}
\usepackage{color}
\usepackage[usenames,dvipsnames,x11names,svgnames]{xcolor}
\usepackage[colorlinks=true,linkcolor=NavyBlue,citecolor=DarkGreen]{hyperref}
\usepackage{amsthm}
\usepackage{amscd}
\usepackage{geometry}
\usepackage{mathrsfs}
\usepackage{dsfont}

\newtheorem{thm}{Theorem}

\newtheorem{conj}{Conjecture}

\newtheorem{prop}{Proposition}
\newtheorem{lem}[prop]{Lemma}

\theoremstyle{definition}

\newcommand{\leqs}{\leqslant }
\newcommand{\geqs}{\geqslant }

\newcommand{\be}{\begin{equation*}}
\newcommand{\ee}{\end{equation*} }

\newcommand{\ben}{\begin{equation}}
\newcommand{\een}{\end{equation} }

\newcommand{\bs}{\begin{split}}
\newcommand{\es}{\end{split}}

\newcommand{\bmu}{\begin{multline*}}
\newcommand{\emu}{\end{multline*}}

\newcommand{\bmun}{\begin{multline}}
\newcommand{\emun}{\end{multline}}

\allowdisplaybreaks

\begin{document}
\title[The maximum of the zeta function on the 1-line]{A note on the maximum of the Riemann zeta function on the 1-line}
\author{Winston Heap}
\address{Department of Mathematics, University College London, 25 Gordon Street, London WC1H.}
\email{winstonheap@gmail.com}
\thanks{Research supported by European Research Council grant no. 670239.}

\maketitle
\begin{abstract}
We investigate the relationship between the maximum of the zeta function on the 1-line and the maximal order of $S(t)$, the error term in the number of zeros up to height $t$. We show that the conjectured upper bounds on $S(t)$ along with the Riemann hypothesis imply a conjecture of Littlewood that $\max_{t\in [1,T]}|\zeta(1+it)|\sim e^\gamma\log\log T$. The relationship in the region $1/2<\sigma<1$ is also investigated.
\end{abstract}

\section{Introduction}
The behaviour of large values of the Riemann zeta function on the 1-line was first investigated by Littlewood \cite{L}. Over the years, his lower bound has been improved several times; the current best \cite{AMM} establishes arbitrarily large values of $t$ for which \[|\zeta(1+it)|\geqs e^\gamma (\log\log t+\log\log\log t)+O(1).\] 
 
In the other direction, assuming the Riemann hypothesis he proved that for large $t$
\ben
\label{euler prod}
\zeta(1+it)\sim\prod_{p\leqs \log^2 t}\bigg(1-\frac{1}{p^{1+it}}\bigg)^{-1}
\een
from which it follows by Merten's Theorem that 
\ben
\label{littlewood}
|\zeta(1+it)|\leqs 2e^\gamma  (1+o(1))\log\log t.
\een
It is believed that the length of the Euler product can be reduced to $\log t$ and as a consequence one gets the following conjecture\footnote{For a more precise version, see the paper of Granville and Soundararajan \cite{GS}.}. 

\begin{conj}
\label{max of zeta}
We have 
\[
\max_{t\in [1,T]}|\zeta(1+it)|\sim e^\gamma\log\log T.
\]
\end{conj}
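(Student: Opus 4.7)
The plan is to refine Littlewood's Euler product approximation \eqref{euler prod} so that the cutoff can be taken at $\log t$ rather than $\log^2 t$; Mertens' third theorem then gives $\prod_{p\leqs\log t}(1-1/p)^{-1}\sim e^\gamma\log\log t$, so the estimate \eqref{littlewood} strengthens to $|\zeta(1+it)|\leqs e^\gamma\log\log T(1+o(1))$, which together with the Aistleitner--Mahatab--Munsch $\Omega$-result from the introduction yields the conjecture.

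The first step is to invoke a Selberg-type hybrid formula: under RH, for a parameter $X\geqs 2$,
\[
\log\zeta(1+it)=\sum_{n\leqs X}\frac{\Lambda(n)}{n^{1+it}\log n}+E(t,X),
\]
where $E(t,X)$ is a sum over nontrivial zeros $\rho=\tfrac12+i\gamma$ against a rapidly-decaying kernel of magnitude $X^{-1/2}$. Partial summation rewrites $E(t,X)$ as an integral involving $S(t+u)-S(t-u)$ against an explicit weight in $u$, so that pointwise bounds on $S$ translate directly into bounds on $E$.

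The main obstacle is the second step: showing $E(t,\log T)=o(\log\log T)$. Bounding the zero sum crudely by (number of zeros) times (kernel height) only gives $X^{-1/2}\log t$, which permits $X$ as small as $\log^{1+\varepsilon}T$ but leaves a stubborn factor $1+\varepsilon$ in the final constant. Closing this gap requires genuine oscillation in $\sum_\gamma X^{i(\gamma-t)}W(\gamma-t)$, and this is where the conjectured bound on $S$ is expected to enter: via the identity $N(t+h)-N(t-h)=\tfrac{h}{\pi}\log t+S(t+h)-S(t-h)+O(1/t)$, a sub-power bound on $S$ forces the zeros to lie near their mean positions on the short scale $h\asymp 1/\log\log T$, and integration by parts converts this near-equidistribution into cancellation of the desired order.

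Finally, with $X=\log T$ the prime side is bounded above by $\sum_{p\leqs\log T}1/p=\log\log\log T+M+o(1)$, and exponentiating (after disposing of the secondary prime-power terms in the $\Lambda(n)$ sum) recovers the bound $|\zeta(1+it)|\leqs e^\gamma\log\log T(1+o(1))$ by Mertens. The delicate interplay throughout is that shrinking $X$ widens the effective kernel window and so demands stronger control on $S$; calibrating the two so that the saving from the $S$-conjecture exactly compensates the growth of the prime sum is the crux of the argument and dictates which pointwise bound on $S$ is actually needed.
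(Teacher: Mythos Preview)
This statement is labelled a \emph{Conjecture} in the paper and is not proved there; the paper's actual contribution (Theorem~\ref{main thm}) is the conditional implication that RH together with Conjecture~\ref{max of s(t)} implies Conjecture~\ref{max of zeta}. Your proposal is likewise openly conditional---you invoke ``the conjectured bound on $S$'' and ``the $S$-conjecture''---so at best it is a sketch of that same conditional implication, not a proof of the conjecture itself. With that understood, your outline follows essentially the paper's route: start from an explicit formula expressing $\log\zeta(1+it)$ as a truncated prime sum plus a zero contribution, rewrite the zero sum via partial summation as an integral against $S(t+y)$, and feed in a pointwise bound on $S$. The paper does this starting from the smoothed formula for $-\zeta'/\zeta$ with weight $e^{-n/X}$ (Proposition~\ref{main prop}) and then integrating in $\sigma$, arriving at the integral $J(t,X)$; your unspecified ``Selberg-type hybrid formula'' is a variant of the same mechanism.

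There are, however, two quantitative slips in your sketch that matter. First, you need $E(t,X)=o(1)$, not merely $o(\log\log T)$: the error $E$ sits inside $\log\zeta$, so after exponentiating any $E$ that does not tend to zero corrupts the leading constant. Second, your arithmetic on the crude bound is off. The trivial estimate (number of zeros times kernel height) gives a zero contribution of size $X^{-1/2}\log t$, and for this to be $o(1)$ one needs $X$ of order $(\log t)^{2}$, not $(\log t)^{1+\varepsilon}$; this is precisely Littlewood's $2e^\gamma$. Conversely, you do \emph{not} need to hit $X=\log T$ exactly: since Mertens gives $\prod_{p\leqs X}(1-1/p)^{-1}\sim e^\gamma\log X$, any choice $X=(\log t)^{1+o(1)}$ already yields the constant $e^\gamma$. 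The paper's requirement $\max(L(t)^2,\log t)=o(X)$, which under Conjecture~\ref{max of s(t)} permits $X$ just barely larger than $\log t$, is exactly what is needed and is what your heuristic about equidistribution of zeros is groping towards.
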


Littlewood \cite{L2} later refined the upper bound \eqref{littlewood} by replacing the constant $2e^\gamma$ by $2\beta(1)e^\gamma$ where $\beta(1)=\lim_{\sigma\to1^-}\beta(\sigma)$ and for $1/2<\sigma<1$, $\beta(\sigma)=v(\sigma)/(2-2\sigma)$ where $v(\sigma)$ is defined as the minimum exponent for which $\log \zeta(s)\ll (\log t)^{v(\sigma)}$. We will prove a stronger relation where the maximum on the 1-line is related to the behaviour on the 1/2-line.

Another object of interest in the theory of the Riemann zeta function is the remainder $S(t)$ in the formula for the number of zeros of height $t$ in the critical strip: 
\[
N(t)=\frac{t}{2\pi}\log \Big(\frac{t}{2\pi e}\Big)+\frac{7}{8}+S(t)+O(1/t).
\] 
Here, the classical bound is $S(t)\ll \log t$. Under the assumption of the Riemann hypothesis, Selberg showed that $S(t)\ll\log t/\log\log t$ which remains the current best. In terms of lower bounds, the most recent improvements are due to Bondarenko and Seip \cite{BS} who showed that conditionally there exist arbitrarily large values of $t$ for which $|S(t)|\gg\sqrt{\log t \log\log\log t/\log\log t}$.  It is generally believed that the lower bound is closer to the true maximal order of growth. 
Accordingly we define 
\[
\alpha=\limsup_{t\to\infty}\frac{\log |S(t)|}{\log\log t}
\]
and note that, conditionally, $1/2\leqs \alpha\leqs 1$. 
\begin{conj}
\label{max of s(t)}
We have \[\alpha=1/2.\]
\end{conj}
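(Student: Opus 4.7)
I should say at the outset that Conjecture \ref{max of s(t)} is a well-known open problem which I cannot settle here; what follows is a plan for how one might attack it, together with an identification of the main obstruction. The conjecture splits into two halves. The lower bound $\alpha\geqs 1/2$ is already implicit in the excerpt: the Bondarenko--Seip estimate yields $\log|S(t)|=\tfrac{1}{2}\log\log t+O(\log\log\log t)$ along a suitable sequence, so $\alpha\geqs 1/2$ on RH.

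For the upper bound $\alpha\leqs 1/2$ the target is to improve Selberg's $S(t)\ll\log t/\log\log t$ to $S(t)\ll_\varepsilon (\log t)^{1/2+\varepsilon}$ for every $\varepsilon>0$. The natural starting point is Selberg's explicit formula, which represents $S(t)$ as a smoothly truncated sum over prime powers of length $x=x(t)$ plus an error controlled by the local density of zeros of $\zeta$ near $1/2+it$. I would first push Selberg's moment computation, which gives Gaussian-type estimates $\int_T^{2T}|S(t)|^{2k}\,dt\ll T(c\log\log T)^k(2k)!/(k!)$ for small $k$, to values of $k$ as large as possible---ideally of order $\log t/\log\log t$. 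Combined with Markov's inequality this already constrains the measure of $\{t\leqs T:|S(t)|>V\}$ in a way compatible with the conjecture, and an auxiliary argument ruling out long intervals on which zeros cluster abnormally tightly around $1/2+it$ would, in principle, convert such moment information into the desired pointwise estimate.

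The main obstacle, and the reason this statement is recorded as a conjecture rather than a theorem, is that all known approaches to pointwise bounds on $S(t)$ eventually pass through convexity estimates for $\log\zeta(s)$, and these inevitably lose a factor of order $\log\log t$ over what the moment calculation suggests. No one has succeeded in converting the moment information---which is itself consistent with $\alpha=1/2$---into pointwise estimates at the required level. Progress presumably requires either a substantially better handle on the vertical distribution of zeros of $\zeta$ on short scales, or a new analytic input that sidesteps the convexity step; neither is within the scope of the present paper, which instead takes this bound as a hypothesis from which, together with RH, Conjecture \ref{max of zeta} is to be derived.
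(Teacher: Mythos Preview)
Your reading is correct: the paper does not prove this statement at all. It is recorded as Conjecture~\ref{max of s(t)} precisely because it is open, and the paper's contribution is the implication in Theorem~\ref{main thm} that RH together with this conjecture yields Conjecture~\ref{max of zeta}. So there is no ``paper's own proof'' to compare against; your identification of the statement as a genuine conjecture, together with the observation that the conditional lower bound $\alpha\geqs 1/2$ follows from Bondarenko--Seip while the upper bound remains out of reach, matches the paper's own stance exactly.

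Your sketch of a possible attack via Selberg moments is a reasonable account of the heuristic landscape, but it is not something the paper attempts or claims, and you are right to flag that the conversion from moment bounds to pointwise bounds is the essential obstruction. Nothing further is required here.
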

We remark that Farmer, Gonek and Hughes \cite{FGH} have made the more precise conjecture $\limsup_{t\to\infty} S(t)/\sqrt{\log t\log\log t}=1/\pi\sqrt{2}$.  
The maximum of $S(t)$ is closely related to the maximum of the zeta function. In this note we attempt to clarify this relation on the 1-line. 
\begin{thm}
\label{main thm}
Assume the Riemann hypothesis and let 
\ben
\label{L}
L(t)=\max_{|u-t|\leqs C\log\log t}|S(u)|
\een 
for $C>1/\pi$. Then for $X$ satisfying $\max(L(t)^2, \log t)=o(X)$ and large $t$ we have
\[
\zeta(1+it)=(1+o(1))\prod_{p\leqs X}\bigg(1-\frac{1}{p^{1+it}}\bigg)^{-1}.
\]
In particular, 
\[
|\zeta(1+it)|\leqs e^\gamma  (1+o(1))\log(L(t)^2+\log t).
\]
and hence Conjecture \ref{max of s(t)} together with the Riemann hypothesis implies Conjecture \ref{max of zeta}.
\end{thm}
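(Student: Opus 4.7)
The plan is to represent $\log\zeta(1+it)-\log\zeta_X(1+it)$ as a contour integral of $\log\zeta$ along a vertical line close to the critical line, and then to bound this integral using the Riemann hypothesis together with the control on $S(u)$ in a neighbourhood of $t$ provided by $L(t)$.

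First I fix a smooth cutoff $\phi\in C_c^\infty([0,\infty))$ with $\phi\equiv 1$ on $[0,1]$ and $\phi\equiv 0$ on $[2,\infty)$, and consider its Mellin transform $\Phi(w)=\int_0^\infty\phi(y)y^{w-1}\,dy$. Since $\phi$ is flat at $0$, $\Phi$ is meromorphic with a simple pole at $w=0$ of residue $\phi(0)=1$ and decays faster than any polynomial along vertical strips. Mellin inversion and the Dirichlet series expansion of $\log\zeta$ give, for $c>0$,
\[
\sum_n\frac{\Lambda(n)\phi(n/X)}{n^{1+it}\log n}=\frac{1}{2\pi i}\int_{(c)}\Phi(w)X^w\log\zeta(1+it+w)\,dw.
\]
The left-hand side differs from $\log\zeta_X(1+it)$ only by contributions from prime powers $n\in(X,2X)$, which are $o(1)$ by the prime number theorem.

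Next I shift the contour to $\mathrm{Re}(w)=-1/2+\delta$ for some small $\delta=\delta(t)>0$. Under RH the integrand is holomorphic in the intervening strip apart from the pole of $\Phi$ at $w=0$, whose residue is $\log\zeta(1+it)$; the branch point of $\log\zeta$ at $w=-it$ is avoided for large $t$ by contour truncation, and the horizontal pieces are negligible thanks to the decay of $\Phi$. This yields
\[
\log\zeta(1+it)-\log\zeta_X(1+it)=-\frac{1}{2\pi i}\int_{(-1/2+\delta)}\Phi(w)X^w\log\zeta(1+it+w)\,dw+o(1).
\]
On the new line $|X^w|=X^{-1/2+\delta}$, and the decay of $\Phi$ effectively localises the integral to $|\mathrm{Im}(w)|\leqs C\log\log t$, which matches the window used to define $L(t)$.

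The main step is to bound $\log\zeta(1/2+\delta+iu)$ for $u=t+\mathrm{Im}(w)$ in that window. Integrating the standard expansion $\zeta'/\zeta(s)=\sum_{|\gamma-u|\leqs 1}(s-\rho)^{-1}+O(\log u)$ from $\sigma=1/2+\delta$ to $\sigma=1$ yields, under RH,
\[
\log\zeta(1/2+\delta+iu)=\log\zeta(1+iu)-\sum_{|\gamma-u|\leqs 1}\log\frac{1/2+i(u-\gamma)}{\delta+i(u-\gamma)}+O(\log\log u).
\]
The imaginary part of the sum is comparable to $\pi S(u)$ up to contributions from zeros at distance $\asymp 1$ from $u$, which are bounded by $L(t)$ uniformly in the window; the real part is bounded by the zero count in the window times $\log(1/\delta)$, and this count is $O(\log t+L(t))$ by $N'(u)\sim\log u/(2\pi)$. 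The condition $C>1/\pi$ ensures the window around $t$ has length sufficient to capture the leading-order zero density with the desired constant. Combining these estimates with the prefactor $X^{-1/2+\delta}$, the shifted integral is $o(1)$ precisely when $X^{1/2}$ dominates $L(t)+\sqrt{\log t}$, i.e.\ when $\max(L(t)^2,\log t)=o(X)$.

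The stated bound on $|\zeta(1+it)|$ then follows from the Euler product obtained together with Mertens' theorem: choosing $X=(L(t)^2+\log t)h(t)$ for any sufficiently slowly growing $h(t)\to\infty$,
\[
|\zeta(1+it)|\leqs(1+o(1))\prod_{p\leqs X}(1-1/p)^{-1}\sim e^\gamma(1+o(1))\log(L(t)^2+\log t).
\]
Under Conjecture \ref{max of s(t)} we have $L(t)\ll(\log t)^{1/2+o(1)}$, so $\log(L(t)^2+\log t)\sim\log\log t$, matching Littlewood's $\Omega$-result and verifying Conjecture \ref{max of zeta}.

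The principal technical obstacle is the pointwise bound for $|\log\zeta(1/2+\delta+iu)|$ in terms of $L(t)$: Selberg's generic estimate $O(\log u/\log\log u)$ is too weak, and recovering the quadratic exponent on $L(t)$ in the final hypothesis requires tracking both real and imaginary parts of the zero contributions in the Hadamard product carefully, exploiting the identity $\pi S(u)=\arg\zeta(1/2+iu)$ to convert argument information into pointwise control.
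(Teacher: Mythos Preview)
Your approach has a genuine gap at the step where you bound the shifted integral. You correctly observe that on the line $\mathrm{Re}(w)=-1/2+\delta$ the integrand carries a factor $X^{-1/2+\delta}$ and that the imaginary part of $\log\zeta(1/2+\delta+iu)$ is governed by $S(u)$, hence by $L(t)$ inside the window. The problem is the real part. Your own estimate gives $\mathrm{Re}\log\zeta(1/2+\delta+iu)$ bounded by the local zero count times $\log(1/\delta)$, and that zero count is $\asymp \tfrac{1}{\pi}\log t$, not $L(t)$ or $\sqrt{\log t}$; indeed even the sharpest conditional bound $|\log\zeta(1/2+\delta+iu)|\ll(\log t)^{1-2\delta}/\log\log t$ is essentially $\log t$ for small $\delta$. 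Consequently the shifted integral is only $\ll X^{-1/2+\delta}\log t$, which forces $X\gg(\log t)^{2}$ rather than $X\gg\log t$. This is exactly Littlewood's restriction, so your argument as written recovers \eqref{euler prod} and \eqref{littlewood} but not the theorem. The non sequitur is the sentence ``the shifted integral is $o(1)$ precisely when $X^{1/2}$ dominates $L(t)+\sqrt{\log t}$'': the $\log t$ coming from the zero density has not been removed. (A secondary issue: with a compactly supported smooth $\phi$, the Mellin transform $\Phi$ decays only faster than any power, not exponentially, so the localisation to $|\mathrm{Im}(w)|\leqs C\log\log t$ and the specific threshold $C>1/\pi$ are not justified by your setup; in the paper that constant comes from the decay rate $e^{-\pi|y|/2}$ of the Gamma function, which is absent here.)

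The paper circumvents this obstruction by working with $-\zeta'/\zeta$ rather than $\log\zeta$, so that the contribution of the zeros appears as the explicit sum $\sum_\rho X^{\rho-s}\Gamma(\rho-s)$, which one rewrites as a Stieltjes integral against $dN(y)=dN^*(y)+dS(y)$. The point is that the smooth density $dN^*(y)\sim\tfrac{1}{2\pi}\log y\,dy$ is integrated against the oscillatory kernel $X^{iy}\Gamma(\tfrac12-\sigma+iy)$, and this integral is evaluated \emph{exactly}: it equals $(e^{-1/X}-1)\log(t/2\pi)+O(X^{1/2-\sigma})$, which is $O(X^{-1}\log t)+O(X^{1/2-\sigma})$ instead of the trivial $O(X^{1/2-\sigma}\log t)$. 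This is where the factor of $\log t$ coming from the zero density is cancelled. Only the $dS(y)$ piece remains, and after integration by parts it becomes the integral $J(t,X)$ in which $S(t+y)$ appears pointwise; bounding it by $L(t)$ on $|y|\leqs C\log\log t$ then yields the error $X^{-1/2}L(t)\log X$, giving the stated condition $\max(L(t)^2,\log t)=o(X)$. In short, the missing idea in your sketch is the oscillatory cancellation against the smooth part of the zero-counting function; a pointwise bound on $\log\zeta$ near the critical line cannot see it.
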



A common approach to proving conditional upper bounds such as \eqref{littlewood} is via the explicit formula. On assuming RH one can trivially estimate the sum over zeros which leaves only a sum over primes. We aim to be more precise in this step and simply write the sum over zeros as a Stieltjes integral which allows us to exploit some cancellation from oscillating terms. This is essentially the content of the following proposition where the sum over zeros has been replaced by an integral involving $S(t)$.

\begin{prop}
\label{main prop}
Assume the Riemann hypothesis. Then, uniformly for $1/2+\delta\leqs \sigma \leqs 9/8$ with fixed $\delta>0$, and $1\leqs X\leqs e^{\sqrt{t}}$, 
we have
\ben
\label{explicit form}
-\frac{\zeta^\prime(s)}{\zeta(s)}=\sum_{n\geqs 1}\frac{\Lambda(n)}{n^s}e^{-n/X}+J(t,X)+O(X^{-1}\log t)+O(X^{1/2-\sigma})
\een
where 
\[
J(t,X)=iX^{1/2-\sigma}\int_{-t/2}^{t/2} X^{iy}\Gamma(\tfrac{1}{2}-\sigma+iy)\Big(\log X+\frac{\Gamma^\prime}{\Gamma}(\tfrac{1}{2}-\sigma+iy) \Big)S(t+y)dy.
\]
\end{prop}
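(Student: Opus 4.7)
The plan is to apply Perron's formula with exponential weight to $-\zeta'/\zeta(s)$, shift the contour past the simple poles at $w=0$, $w=1-s$, and $w=\rho-s$ for each nontrivial zero $\rho$, and then express the resulting sum over zeros as a Stieltjes integral against $dN(t+y)$ that is integrated by parts to surface $S(t+y)$.

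Starting from the Mellin identity $e^{-x}=\tfrac{1}{2\pi i}\int_{(c)}\Gamma(w)x^{-w}\,dw$ (valid for $c,x>0$), one has for $\sigma>-1$
\[
\sum_{n\geqs 1}\frac{\Lambda(n)}{n^s}e^{-n/X}=\frac{1}{2\pi i}\int_{(2)}\Gamma(w)\bigg(-\frac{\zeta'}{\zeta}(s+w)\bigg)X^w\,dw.
\]
I would shift the contour to $\Re w=c'$ for some $c'\in(-\sigma,\tfrac12-\sigma)$, which exists uniformly throughout the range $\sigma\in[\tfrac12+\delta,9/8]$ (take e.g.\ $c'=\tfrac12-\sigma-\delta/4$). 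This crosses simple poles at $w=0$ (residue $-\zeta'/\zeta(s)$), at $w=1-s$ (contribution $X^{1-s}\Gamma(1-s)=O(X^{1-\sigma}e^{-\pi t/2})$ by Stirling, hence negligible), and at each $w=\rho-s$. Standard bounds for $\zeta'/\zeta$ on the shifted vertical line (which under RH sits to the left of the critical line and is a bounded distance from any zero) combined with exponential decay of $\Gamma$ show that the integral on $(c')$ is $O(X^{1/2-\sigma})$ and that the horizontal closures vanish. Writing $\rho=\tfrac12+i\gamma$ and setting $h(y):=X^{iy}\Gamma(\tfrac12-\sigma+iy)$, this yields
\[
-\frac{\zeta'}{\zeta}(s)=\sum_{n\geqs 1}\frac{\Lambda(n)}{n^s}e^{-n/X}+X^{1/2-\sigma}\sum_{\gamma}h(\gamma-t)+O(X^{1/2-\sigma}).
\]

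Next I would truncate the zero sum to $|\gamma-t|\leqs t/2$; the tail is $O(X^{1/2-\sigma})$ via Stirling's bound $|\Gamma(\tfrac12-\sigma+iy)|\ll|y|^{-\sigma}e^{-\pi|y|/2}$ and the estimate $\#\{\gamma\in[T,T+1]\}\ll\log T$. The truncated sum is $\int_{-t/2}^{t/2}h(y)\,dN(t+y)$, and substituting $N(u)=\tfrac{u}{2\pi}\log\tfrac{u}{2\pi e}+\tfrac{7}{8}+S(u)+O(1/u)$ splits it into a smooth piece coming from $dM(u)=\tfrac{1}{2\pi}\log\tfrac{u}{2\pi}\,du$ and a piece from $dS(t+y)$. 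For the smooth piece I would invoke the Mellin evaluation
\[
\int_{-\infty}^{\infty}X^{iy}\Gamma(\tfrac12-\sigma+iy)\,dy=2\pi X^{\sigma-1/2}(e^{-1/X}-1)=O(X^{\sigma-3/2}),
\]
combined with $\log\tfrac{t+y}{2\pi}=\log\tfrac{t}{2\pi}+O(|y|/t)$ and a similar Mellin bound for the $O(|y|/t)$ correction, to bound the smooth contribution by $O(X^{-1}\log t)$ after multiplication by the outer factor $X^{1/2-\sigma}$.

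For the $dS(t+y)$ piece I would integrate by parts. The boundary terms at $y=\pm t/2$ are super-exponentially small in $t$ via Stirling together with the classical $S(u)\ll\log u$, and differentiation gives
\[
h'(y)=iX^{iy}\Gamma(\tfrac12-\sigma+iy)\bigg[\log X+\frac{\Gamma'}{\Gamma}(\tfrac12-\sigma+iy)\bigg],
\]
so the remaining integral is precisely $J(t,X)$. The main technical obstacle I anticipate is the smooth-piece bound: a trivial absolute-value estimate of $\int h\,dM$ would only yield $O(X^{1/2-\sigma}\log t)$, which fails to be an error term when $\sigma$ is close to $1/2$, so the oscillation $X^{iy}$ must be harnessed through the Mellin inversion above to obtain the claimed saving to $O(X^{-1}\log t)$ uniformly in $\sigma\in[\tfrac12+\delta,9/8]$ and $X\leqs e^{\sqrt{t}}$.
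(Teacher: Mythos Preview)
Your outline matches the paper's proof almost step for step: Mellin transform with the $e^{-n/X}$ weight, contour shift past $w=0$, $w=1-s$, and the nontrivial zeros, then write the zero sum as a Stieltjes integral, split off the smooth part of $N$, evaluate it by Mellin inversion to extract the $(e^{-1/X}-1)\log(t/2\pi)$ saving, and integrate the $dS$ piece by parts to produce $J(t,X)$. The identification of the main technical point --- that the smooth piece must be evaluated, not estimated trivially --- is exactly right.

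There is one slip in the contour shift. With $c'=\tfrac12-\sigma-\delta/4$ the line $\Re(s+w)=\tfrac12-\delta/4$ is at fixed distance from the zeros, so under RH one has $\zeta'/\zeta\ll\log t$ there; hence the shifted integral is $O(X^{c'}\log t)=O(X^{1/2-\sigma-\delta/4}\log t)$, not $O(X^{1/2-\sigma})$ as you assert. This is not absorbed by $O(X^{-1}\log t)+O(X^{1/2-\sigma})$ when $X$ is small (say $X$ bounded), so the stated uniformity over $1\leqs X\leqs e^{\sqrt t}$ fails. The paper fixes this by shifting further left, to $\Re w\in(-2,-1)$ (equivalently $\Re z\in(-2+\sigma,-1+\sigma)$): one then crosses the pole of $\Gamma(w)$ at $w=-1$, which contributes $\tfrac{\zeta'}{\zeta}(s-1)X^{-1}=O(X^{-1}\log t)$, and the integral on the new line is $O(X^{c'}\log t)\ll O(X^{-1}\log t)$. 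That extra residue is precisely the origin of the $O(X^{-1}\log t)$ term in the statement. With this adjustment your argument goes through.
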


We will deduce Theorem \ref{main thm} from this proposition in the next section. One can also use this formula to get upper bounds when $1/2< \sigma<1$. In this region we have the conditional upper bound 
\ben
\label{zeta RH bound}
\zeta(\sigma+it)\ll \exp\bigg(A\frac{(\log t)^{2-2\sigma}}{\log\log t}\bigg),
\een
see \cite{T}, and the unconditional lower bound
\[
\zeta(\sigma+it)=\Omega\bigg( \exp\bigg(B\frac{(\log t)^{1-\sigma}}{(\log\log t)^\sigma}\bigg)\bigg),
\]
originally due to Montgomery \cite{M}. The value of the constant $B$ has been improved several times with the current best due to Bondarenko and Seip \cite{BS0}. Again, it is generally believed that the lower bound is the true order of the maximum; indeed, based on some heuristic arguments Montgomery \cite{M} conjectured that this was the case (see \cite{Lam} for a detailed discussion). In terms of relating this to the maximum of $S(t)$ we have the following. 

\begin{thm}\label{second thm}Assume the Riemann hypothesis and let $L(t)$ be given by \eqref{L}.
Then for fixed $1/2<\sigma<1$, 
\be
\label{zeta RH bound 2}
\zeta(\sigma+it)\ll \exp\big(c\max\big(L(t)^{2-2\sigma},(\log t)^{1-\sigma}\big)(\log\log t)^{1-2\sigma}\big). 
\ee
\end{thm}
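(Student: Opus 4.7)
The plan is to deduce Theorem \ref{second thm} from Proposition \ref{main prop} by integrating in the real-part variable to recover $\log\zeta(\sigma+it)$, and then optimising $X$. Since $\log\zeta(u+it)\to 0$ as $u\to\infty$, one has
\[
\log\zeta(\sigma+it)=-\int_\sigma^\infty \frac{\zeta'}{\zeta}(u+it)\,du,
\]
which I would split at $u=9/8$: the upper tail contributes $O(1)$ by absolute convergence, and on $[\sigma,9/8]$ one substitutes the explicit formula \eqref{explicit form} (interpreting the $\sigma$ there as $u$).

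Integrating the Dirichlet series term by term produces $\sum_n \Lambda(n)n^{-\sigma-it}(\log n)^{-1}e^{-n/X}$ plus a bounded correction, whose modulus is $O(X^{1-\sigma}/\log X)$ by partial summation and the Prime Number Theorem. The two explicit error terms contribute $O(X^{-1}\log t + X^{1/2-\sigma}/\log X)$. The oscillatory piece $J(u+it,X)$ is the substantive work: I would split the $y$-integral at $|y|=C\log\log t$. On the central segment, $|S(t+y)|\leq L(t)$ together with the standard estimates $\int_{\mathbb{R}}|\Gamma(\tfrac{1}{2}-u+iy)|\,dy\ll_u 1$ and $|\Gamma'/\Gamma(\tfrac{1}{2}-u+iy)|\ll\log(2+|y|)$ give a bound $\ll L(t)X^{1/2-u}\log X$. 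On the tail, combining the Stirling decay $|\Gamma(\tfrac{1}{2}-u+iy)|\ll|y|^{-u}e^{-\pi|y|/2}$ with the trivial $|S(t+y)|\ll\log t$ produces $\ll X^{1/2-u}(\log t)^{1-\pi C/2}$ up to powers of $\log\log t$. Integrating in $u$ over $[\sigma,9/8]$ then yields
\[
\log|\zeta(\sigma+it)|\ll \frac{X^{1-\sigma}}{\log X}+L(t)X^{1/2-\sigma}+\frac{X^{1/2-\sigma}(\log t)^{1-\pi C/2}}{\log X}+\frac{\log t}{X}.
\]

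To finish, take $X=c\max(L(t)^2,\log t)(\log\log t)^2$, so that $\log X\asymp\log\log t$. The first two terms are then both of order $\max(L(t)^{2-2\sigma},(\log t)^{1-\sigma})(\log\log t)^{1-2\sigma}$, which is the target; a direct case check in the two regimes $L(t)^2\leq\log t$ and $L(t)^2>\log t$ shows that the third term is at most $(\log t)^{1/2-\pi C/2}(\log\log t)^{-1}$ times the target, hence negligible precisely because $C>1/\pi$; the fourth term is $O((\log\log t)^{-2})$. Exponentiating delivers Theorem \ref{second thm}. The delicate step, and the only place where the constant $1/\pi$ enters, is the tail bound on the $J$-integral, where the $\Gamma$-decay $e^{-\pi|y|/2}$ must barely overcome the trivial $\log t$ coming from $|S(t+y)|$.
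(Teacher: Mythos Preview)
Your proposal is correct and follows essentially the same route as the paper: integrate Proposition~\ref{main prop} in the real variable from $\sigma$ up to (a bit past) $9/8$, split the $y$-integral in $J$ at $|y|=C\log\log t$ to isolate $L(t)$, bound the Dirichlet sum by $X^{1-\sigma}/\log X$, and optimise with $X=\max(L(t)^2,\log t)(\log\log t)^2$. The only cosmetic difference is that the paper records the tail of $J$ as $O(X^{1/2-\sigma}\sqrt{\log t})$ (a convenient upper bound valid for any $C>1/\pi$) rather than tracking the sharper exponent $1-\pi C/2$ as you do; either way the term is negligible after the choice of $X$.
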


Note that the conjecture of Farmer, Gonek and Hughes gives the upper bound $\ll \exp (c(\log t)^{1-\sigma}(\log\log t)^{2-3\sigma})$ which is still a power of a double logarithm away from Montgomery's conjecture. It is possible that trivially bounding $J(t,X)$, as we do below, is too wasteful and that further cancellations are possible. A finer analysis of $J(t,X)$ would also be of interest in determining the lower order terms in Theorem \ref{main thm}.


\section{Proof of Theorems \ref{main thm} and \ref{second thm}}

\begin{proof}[Proof of Theorem \ref{main thm}]
Throughout we assume $X\ll (\log t)^A$. By Stirling's formula we have $\Gamma(\tfrac{1}{2}-\sigma+iy)\ll y^{-\sigma} e^{-\frac{\pi}{2}|y|}$ and 
$\frac{\Gamma^\prime}{\Gamma}(\tfrac{1}{2}-\sigma+iy)\ll \log(2+|y|)$. Applying these along with the classical bound $S(t)\ll \log t$ we find that 
\begin{multline*}
          X^{1/2-\sigma}\int_{C\log\log t}^{t/2} |\Gamma(\tfrac{1}{2}-\sigma+iy)|\Big(\log X+\Big|\frac{\Gamma^\prime}{\Gamma}(\tfrac{1}{2}-\sigma+iy) \Big|\Big)|S(t+y)|dy  \\
    \ll   X^{1/2-\sigma}(\log t)(\log X) \int_{C\log\log t}^{t/2} e^{-\frac{\pi}{2}y}\log (2+y)dy \\
    \ll   X^{1/2-\sigma}(\log t)^{1-\frac{\pi}{2}C}\log_2 t\log_3 t.
\end{multline*}
Hence, for $C>1/\pi$  we may restrict the range of integration in $J(t, X)$ to $|y|\leqs C\log\log t$ at the cost of an error of size $O(X^{1/2-\sigma}\sqrt{\log t})$. Then, using similar bounds to estimate this remaining integral gives
\[
J(t,X)\ll X^{1/2-\sigma}(\log X) L(t)+X^{1/2-\sigma}\sqrt{\log t}.
\]

Applying this in \eqref{explicit form} and integrating from $\sigma=1$ to $\sigma=9/8$ we get
\begin{multline}
\label{log zeta}
      \log\zeta(1+it)= 
      \sum_{n\geqs 1}\frac{\Lambda(n)}{n^{1+it}\log n}e^{-n/X}+ O\bigg(\bigg|\log\zeta(9/8+it)-\sum_{n\geqs 1}\frac{\Lambda(n)}{n^{9/8+it}\log n}e^{-n/X}\bigg|\bigg)
 \\  +O\bigg(\frac{L(t)}{X^{1/2}}\bigg)
+ O\Big(\frac{\sqrt{\log t}}{X^{1/2}\log X}\Big)  + O\Big(\frac{\log t}{X}\Big).
\end{multline}
Choosing $X$ such that $\max(L(t)^2,\log t)=o(X)$ we see that the error terms in the second line of the above are all $o(1)$. It remains to consider the sum over primes. 

By splitting the sum at $n=X$ and applying the expansion $e^{-n/X}=1+O(n/X)$ in the sum over $n\leqs X$ we find that, for $\sigma\geqs 1$,
\be
\begin{split}
        \sum_{n\geqs 1}\frac{\Lambda(n)}{n^{\sigma+it}\log n}e^{-n/X}
    = & \sum_{n\leqs X}\frac{\Lambda(n)}{n^{\sigma+it}\log n}+O\bigg(\frac{1}{\log X}\bigg)
\end{split}
\ee
after estimating the tail sum by the prime number theorem. Also note that for $\sigma\geqs 1$
\be
\begin{split}
          \sum_{n\leqs X}\frac{\Lambda(n)}{n^{\sigma+it}\log n}
    = & \sum_{\substack{k\geqs 1,p\leqs X\\p^k\leqs X}}\frac{1}{kp^{k(\sigma+it)}}
    =  -\sum_{p\leqs X}\log\bigg(1-\frac{1}{p^{\sigma+it}}\bigg)+O\bigg(\sum_{\substack{k\geqs 1,p\leqs X\\p^k> X}}\frac{1}{kp^{k\sigma}}\bigg)
  \\ = & -\sum_{p\leqs X}\log\bigg(1-\frac{1}{p^{\sigma+it}}\bigg)+O\bigg(\frac{1}{\log X}
  \bigg).
\end{split}
\ee
From this it is clear that the first error term of \eqref{log zeta} is $o(1)$ as $X\to\infty$ and hence we acquire the asymptotic
\[
    \zeta(1+it)=(1+o(1))\prod_{p\leqs X}\bigg(1-\frac{1}{p^{1+it}}\bigg)^{-1} 
\]
provided $\max(L(t)^2,\log t)=o(X)$. Theorem \ref{main thm} then follows.

\end{proof}

\begin{proof}[Proof of Theorem \ref{second thm}] Adapting the above proof to the case $1/2<\sigma<1$ gives
\be
\label{log zeta sigma}
      \log\zeta(\sigma+it)= 
      \sum_{n\geqs 1}\frac{\Lambda(n)}{n^{\sigma+it}\log n}e^{-n/X} +O\bigg(\frac{L(t)}{X^{\sigma-1/2}}\bigg)
  + O\Big(\frac{\sqrt{\log t}}{X^{\sigma-1/2}\log X}\Big)+ O\Big(\frac{\log t}{X}\Big)
\ee
A short calculation with the prime number theorem shows that the sum over $n$ is $\ll X^{1-\sigma}/\log x$ and so 
\ben
\label{log zeta sigma}
      \log|\zeta(\sigma+it)|\ll
      \frac{X^{1-\sigma}}{\log X} +\frac{L(t)}{X^{\sigma-1/2}}
  +\frac{\sqrt{\log t}}{X^{\sigma-1/2}\log X}+\frac{\log t}{X}.
\een
Taking $X=\max(L(t)^2,\log t)(\log\log t)^2$ balances the first two terms and the result follows.
\end{proof}


\section{Proof of Proposition \ref{main prop}}

We start from a slightly more precise version of the explicit formula used by Littlewood (see Theorem 14.4 of Titchmarsh \cite{T}). The proof is fairly standard but we shall give most of the details for clarity.

\begin{lem}
Assume the Riemann hypothesis. For large $t$ we have
\ben
\label{explicit}
-\frac{\zeta^\prime(s)}{\zeta(s)}=\sum_{n\geqs 1}\frac{\Lambda(n)}{n^s}e^{-n/X}+\sum_{\rho}X^{\rho-s}\Gamma(\rho-s)+O(X^{-1}\log t)
\een
uniformly for $1/2\leqs \sigma \leqs 9/8$ and $1\leqs X\leqs e^{\sqrt{t}}$.
\end{lem}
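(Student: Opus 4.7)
The plan is to prove \eqref{explicit} by Mellin inversion followed by a contour shift, in the spirit of the proof of Theorem 14.4 of \cite{T}. I would begin with the Mellin identity $e^{-x}=\frac{1}{2\pi i}\int_{(c)}\Gamma(w)x^{-w}\,dw$ for $c>0$, apply it with $x=n/X$, multiply by $\Lambda(n)/n^s$, sum over $n$, and interchange (justified by absolute convergence on $\Re w=2$) to obtain
\be
\sum_{n\geqs 1}\frac{\Lambda(n)}{n^s}e^{-n/X}=-\frac{1}{2\pi i}\int_{(2)}X^w\Gamma(w)\frac{\zeta'}{\zeta}(s+w)\,dw.
\ee

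The next step is to shift the contour from $\Re w=2$ down to $\Re w=-3/2$. This line is chosen so that it lies strictly to the left of all nontrivial zero contributions at $w=\rho-s$ (which on RH have $\Re w=1/2-\sigma\in[-5/8,0]$) but strictly to the right of the first trivial zero contribution at $\Re w=-2-\sigma\leqs -5/2$. The residues picked up are $-\zeta'/\zeta(s)$ at $w=0$ (from the pole of $\Gamma$), $X^{1-s}\Gamma(1-s)$ at $w=1-s$ (from the pole of $\zeta$), and $-X^{\rho-s}\Gamma(\rho-s)$ at each $w=\rho-s$. Rearranging yields
\bmu
-\frac{\zeta'}{\zeta}(s)=\sum_{n\geqs 1}\frac{\Lambda(n)}{n^s}e^{-n/X}+\sum_\rho X^{\rho-s}\Gamma(\rho-s) \\
-X^{1-s}\Gamma(1-s)+\frac{1}{2\pi i}\int_{(-3/2)}X^w\Gamma(w)\frac{\zeta'}{\zeta}(s+w)\,dw.
\emu

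The last two terms must be absorbed into the error $O(X^{-1}\log t)$. By Stirling, $|X^{1-s}\Gamma(1-s)|\ll X^{1-\sigma}t^{1/2-\sigma}e^{-\pi t/2}$, which is exponentially small in $t$ for $X\leqs e^{\sqrt t}$. For the remainder integral, on $\Re w=-3/2$ one has $\Re(s+w)\in[-1,-3/8]$, strictly to the left of the critical strip; the functional equation $\zeta(z)=\chi(z)\zeta(1-z)$ then yields $|\zeta'/\zeta(s+w)|\ll\log(2+|t|+|\Im w|)$ (the right-hand factor being bounded by the absolutely convergent series and the $\chi'/\chi$ contribution being logarithmic by Stirling), and combined with the exponential decay of $|\Gamma(-3/2+iy)|$ the integral is $O(X^{-3/2}\log t)\subset O(X^{-1}\log t)$.

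The main obstacle I expect is in the horizontal legs of the rectangular contour used to effect the shift. Since $\zeta'/\zeta$ grows rapidly near ordinates of zeros, one cannot let the height $T\to\infty$ through arbitrary values but must select a sequence $T_n\to\infty$ spaced away from such ordinates, along which $|\zeta'/\zeta(\sigma+iT_n)|\ll\log^2 T_n$ uniformly for $\sigma\in[-3/2,2]$; the exponential decay $e^{-\pi T/2}$ from the $\Gamma$-factor then suppresses the horizontal contributions in the limit. Absolute convergence of $\sum_\rho X^{\rho-s}\Gamma(\rho-s)$ is not an issue, since the Riemann--von Mangoldt estimate $N(T+1)-N(T)\ll\log T$ combined with the exponential decay of $|\Gamma(1/2-\sigma+i(\gamma-t))|$ in $|\gamma-t|$ makes the sum absolutely convergent.
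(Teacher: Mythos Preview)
Your approach is the same as the paper's—Mellin inversion plus a contour shift to the left of the critical strip—but there is one genuine omission. When you move the line from $\Re w=2$ to $\Re w=-3/2$ you cross not only the pole of $\Gamma(w)$ at $w=0$ but also the pole of $\Gamma(w)$ at $w=-1$, whose residue is $-1$. This contributes the term
\[
-X^{-1}\frac{\zeta'}{\zeta}(s-1),
\]
and since $\Re(s-1)=\sigma-1\in[-1/2,1/8]$ one has (on RH, staying away from $\Re=1/2$) $\zeta'/\zeta(s-1)\ll\log t$. This residue is exactly the source of the $O(X^{-1}\log t)$ in the statement; your remaining integral on $\Re w=-3/2$ is indeed $O(X^{-3/2}\log t)$, which is smaller, so without the $w=-1$ pole your displayed identity is simply false by a term of size $X^{-1}\log t$.

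Once you insert this missing residue, your argument coincides with the paper's (the paper works in the variable $z=s+w$ and shifts to $\Re z=\kappa$ with $-2+\sigma<\kappa<-1+\sigma$, which is the same window $-2<\Re w<-1$ you chose). Everything else—the treatment of $X^{1-s}\Gamma(1-s)$, the logarithmic bound for $\zeta'/\zeta$ left of the strip via the functional equation, the choice of horizontal segments away from ordinates of zeros, and the absolute convergence of $\sum_\rho$—is handled correctly.
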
 
\begin{proof}
On the one hand we have
\[
\sum_{n\geqs 1}\frac{\Lambda(n)}{n^s}e^{-n/X}=-\frac{1}{2\pi i}\int_{2-i\infty}^{2+i\infty}\Gamma(z-s)\frac{\zeta^\prime(z)}{\zeta(z)}X^{z-s}dz
\]
which follows from the identity $e^{-Y}=\frac{1}{2\pi i}\int_{(c)}\Gamma(z)Y^{-z}dz$. On the other hand, by shifting contours to the left in the usual way we find that
\begin{multline*}
    -\frac{1}{2\pi i}\int_{2-i\infty}^{2+i\infty}\Gamma(z-s)\frac{\zeta^\prime(z)}{\zeta(z)}X^{z-s}dz\\
= -\frac{\zeta^\prime(s)}{\zeta(s)}-\sum_{\rho}X^{\rho-s}\Gamma(\rho-s)+\Gamma(1-s)X^{1-s}\\
   + \frac{\zeta^\prime(s-1)}{\zeta(s-1)}X^{-1} +\frac{1}{2\pi i}\int_{(\kappa)}\Gamma(z-s)\frac{\zeta^\prime(z)}{\zeta(z)}X^{z-s}dz
\end{multline*}
where $-2+\sigma <\kappa<-1+\sigma$. 

Now,
\[
\Gamma(1-s)X^{1-s}\ll e^{-A|t|}X^{1/2}\leqs e^{-B|t|}
\]
and 
\[
\frac{\zeta^\prime(s-1)}{\zeta(s-1)}X^{-1}\ll \frac{\log t}{X}
\]
since $\zeta^\prime(s)/\zeta(s)\ll \log t$ for $-1\leqs \sigma\leqs 2$ provided that $\sigma\neq 1/2$. 
The integral on the new line is 
\be
\begin{split}
\ll  X^{\kappa-\sigma}\int_{-\infty}^\infty|\Gamma(\kappa+iy-s)|\bigg|\frac{\zeta^\prime(\kappa+iy)}{\zeta(\kappa+iy)}\bigg|dy 
\ll & {X^{\kappa-\sigma}}\int_{-\infty}^\infty e^{-A|y-t|}\log(|y|+2)dy \\
\ll & {X^{\kappa-\sigma}} \log t.
\end{split}
\ee
Clearly this is smaller than our previous error term so we are done.
\end{proof}

One may conduct some basic estimates of the sum over zeros appearing in \eqref{explicit} which gives an upper bound of $X^{1/2-\sigma}\log t$ (see section 14.5 of \cite{T}). After integrating over $\sigma\geqs 1$, we see that one requires $X=\log^2 t$ for this term to be $o(1)$. This is the reason for such a restriction in the length of the Euler product in \eqref{euler prod}. As mentioned, we would like to exploit some cancellation in the sum over zeros in a hope to improve this. In this direction we have the following Lemma.

\begin{lem}Assume the Riemann hypothesis and let $t$ be large. Then, uniformly for $1/2+\delta\leqs \sigma \leqs 9/8$ with fixed $\delta>0$, and $1\leqs X\leqs e^{\sqrt{t}}$, 
we have
\[
 \sum_{\rho}X^{\rho-s}\Gamma(\rho-s)=-\frac{\log (t/2\pi)}{X}+J(t,X)+O(X^{-2}\log t)+ O(X^{1/2-\sigma})
\]
where 
\[
J(t,X)=iX^{1/2-\sigma}\int_{-t/2}^{t/2} X^{iy}\Gamma(\tfrac{1}{2}-\sigma+iy)\Big(\log X+\frac{\Gamma^\prime}{\Gamma}(\tfrac{1}{2}-\sigma+iy) \Big)S(t+y)dy.
\]
\end{lem}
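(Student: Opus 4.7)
The first move is to apply the Riemann hypothesis and write each zero as $\rho = \tfrac{1}{2}+i\gamma$, so that
\[
\sum_\rho X^{\rho-s}\Gamma(\rho-s) = X^{1/2-\sigma}\sum_\gamma X^{i(\gamma-t)}\Gamma(\tfrac{1}{2}-\sigma+i(\gamma-t)).
\]
Stirling's formula gives $|\Gamma(\tfrac{1}{2}-\sigma+iy)|\ll |y|^{-\sigma}e^{-\pi|y|/2}$, so the contribution of zeros with $|\gamma-t|>t/2$ is bounded by $e^{-At}$ times the trivial count $N(2t)\ll t\log t$ and falls well inside the claimed error. One may therefore restrict attention to $|\gamma-t|\leqs t/2$.

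Next I would express the restricted sum as a Riemann--Stieltjes integral against $N(y)$ on $[t/2,\,3t/2]$, and decompose $N(y)=F(y)+S(y)+R(y)$ with $F(y)=(y/2\pi)\log(y/2\pi e)+7/8$ and $R(y)\ll 1/y$. For the smooth piece, $F'(y)=\log(y/2\pi)/(2\pi)$; after substituting $u=y-t$ and splitting $\log(y/2\pi)=\log(t/2\pi)+\log(1+u/t)$, the dominant term to analyze is
\[
\frac{X^{1/2-\sigma}\log(t/2\pi)}{2\pi}\int_{-t/2}^{t/2}X^{iu}\Gamma(\tfrac{1}{2}-\sigma+iu)\,du.
\]
Extending the range to $(-\infty,\infty)$ at exponentially small cost, the Mellin pair $e^{-x}\leftrightarrow\Gamma(v)$ together with a single contour shift past the simple pole of $\Gamma$ at $v=0$ (residue $1$) yields the identity
\[
\int_{-\infty}^{\infty}X^{iu}\Gamma(\tfrac{1}{2}-\sigma+iu)\,du = 2\pi X^{\sigma-1/2}\bigl(e^{-1/X}-1\bigr).
\]
Expanding $e^{-1/X}-1=-1/X+O(1/X^2)$ produces precisely the main term $-\log(t/2\pi)/X$ with leftover $O(X^{-2}\log t)$. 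The contributions of the $\log(1+u/t)$ correction and of the $R(y)$ piece of $N$ are easily bounded using the Gamma decay and the range $|y-t|\leqs t/2$, and are absorbed into the stated error.

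For the $S$ piece I would integrate by parts in the Stieltjes sense on $[t/2,\,3t/2]$. The boundary terms at $y=t\pm t/2$ are dominated by $X^{1/2-\sigma}|\Gamma(\tfrac{1}{2}-\sigma\pm it/2)|\log t\ll e^{-\pi t/4}\log t$ via Stirling and the classical bound $S\ll \log t$, and are negligible. Computing
\[
\frac{d}{dy}\bigl[X^{1/2-\sigma+i(y-t)}\Gamma(\tfrac{1}{2}-\sigma+i(y-t))\bigr] = iX^{1/2-\sigma+i(y-t)}\Gamma(\tfrac{1}{2}-\sigma+i(y-t))\Bigl(\log X + \tfrac{\Gamma'}{\Gamma}(\tfrac{1}{2}-\sigma+i(y-t))\Bigr),
\]
the remaining integral, after the change of variable $u=y-t$, is precisely the expression $J(t,X)$ in the statement (up to the ambient sign convention).

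The main obstacle is the first step of the $F$-analysis: identifying the main term $-\log(t/2\pi)/X$ relies crucially on the residue at $v=0$ encountered when shifting the Mellin contour from $\mathrm{Re}(v)=1/2-\sigma<0$ to the right, together with the tight expansion $e^{-1/X}-1=-1/X+O(1/X^2)$. Once this identity is in hand, everything else is a routine exercise in Stirling estimates, the classical bound on $S(t)$, and Stieltjes integration by parts.
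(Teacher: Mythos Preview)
Your proposal is correct and follows essentially the same route as the paper: truncate the zero sum to $|\gamma-t|\leqs t/2$ via Stirling, write it as a Stieltjes integral against $N(y)$, split off the smooth part to extract the main term via the Mellin identity for $e^{-1/X}$, and integrate the $S$-piece by parts to obtain $J(t,X)$. The only cosmetic difference is that the paper shifts the contour for $\int X^{v}\Gamma(v)\,dv$ to the far left and sums all residues at the negative integers, whereas you shift once to the right past $v=0$; both give $e^{-1/X}-1$.
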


\begin{proof}
We first note that we may restrict the sum to those ordinates for which $t/2\leqs\gamma\leqs 3t/2$. For, the tail satisfies the bound
\[
\sum_{\gamma<t/2}X^{\rho-s}\Gamma(\rho-s)\ll X^{1/2-\sigma} \sum_{\gamma<t/2}e^{-A|\gamma-t|}\ll X^{1/2-\sigma}e^{-At/4}\sum_{\gamma<t/2}e^{-A|\gamma|/2}\ll X^{1/2-\sigma}e^{-At/4}
\]
after writing this last sum as a Stieltjes integral and applying the appropriate bounds on $N(t)$. A similar bound holds for the sum over $\gamma>3t/2$. 

We write the remaining sum in the form
\be
\begin{split}
         \sum_{t/2\leqs \gamma\leqs3t/2}X^{\rho-s}\Gamma(\rho-s)
  = & X^{1/2-\sigma}\sum_{t/2\leqs \gamma\leqs3t/2}X^{i(\gamma-t)}\Gamma(\tfrac{1}{2}-\sigma+i(\gamma-t)) \\
  = & X^{1/2-\sigma}\int_{-t/2}^{3t/2} X^{i(y-t)}\Gamma(\tfrac{1}{2}-\sigma+i(y-t))dN(y).
\end{split}
\ee
We decompose $N(y)$ as a sum of its smooth part and $S(y)$; that is, we write 
$
N(y)=N^*(y)+S(y)
$
where
\be
\begin{split}
N^*(y)&=\frac{1}{\pi}\Delta\arg s(s-1)\pi^{-s/2}\Gamma(s/2)\\
          &=\frac{y}{2\pi}\log(y/2\pi e)+\frac{7}{8}+\frac{c}{y}+O(1/y^2),\qquad y\to\infty
\end{split}
\ee
and $S(y)=\frac{1}{\pi}\Delta\arg\zeta(s)$. Here, $\Delta\arg$ denotes the change in argument along the straight lines from 2 to $2+iy$, and then to $1/2+iy$. 
We note that $N^*(y)$ is a smooth function and its above asymptotic expansion can be given to any degree of accuracy in terms of negative powers of $y$. 

Then, our integral can be written as
\be
\begin{split}
    &  X^{1/2-\sigma}\int_{t/2}^{3t/2} X^{i(y-t)}\Gamma(\tfrac{1}{2}-\sigma+i(y-t))[dN^*(y)+dS(y)] \\
 = & X^{1/2-\sigma}\int_{t/2}^{3t/2} X^{i(y-t)}\Gamma(\tfrac{1}{2}-\sigma+i(y-t))\Big(\frac{1}{2\pi}\log (y/2\pi)+O(1/y^2)\Big)dy \\
     & +X^{1/2-\sigma}\int_{t/2}^{3t/2} \frac{d}{dy}\Big(X^{i(y-t)}\Gamma(\tfrac{1}{2}-\sigma+i(y-t))\Big)S(y)dy\\
     &\qquad+O(X^{1/2-\sigma}e^{-Bt})+O(X^{1/2-\sigma}/t) \\
\end{split}
\ee
after integration by parts and applying the bounds $\Gamma(\tfrac{1}{2}-\sigma+iy)\ll e^{-A|y|}$ and $S(y)\ll \log y$. Denote the first of these integrals by $I$ and the second by $J$.
Now, 
\be
\begin{split}
I = & X^{1/2-\sigma}\int_{t/2}^{3t/2} X^{i(y-t)}\Gamma(\tfrac{1}{2}-\sigma+i(y-t))\Big(\frac{1}{2\pi}\log (y/2\pi)+O(1/y^2)\Big)dy \\ 
      = & X^{1/2-\sigma}\int_{-t/2}^{t/2} X^{iy}\Gamma(\tfrac{1}{2}-\sigma+iy)\frac{1}{2\pi}\log ((y+t)/2\pi)dy+O(X^{1/2-\sigma}/t) \\
      = & \log\Big(\frac{t}{2\pi}\Big)\frac{1}{2\pi}\int_{-t/2}^{t/2} X^{1/2-\sigma+iy}\Gamma(\tfrac{1}{2}-\sigma+iy)dy\\
         & +  X^{1/2-\sigma}\int_{-t/2}^{t/2} X^{iy}\Gamma(\tfrac{1}{2}-\sigma+iy)\frac{1}{2\pi}\log ((1+y/t)/2\pi)dy+ O(X^{1/2-\sigma}/t) 
\end{split}
\ee
The second integral here is bounded and so results in a contribution of $O(X^{1/2-\sigma})$. After extending the tails of the first integral, which incurs only a small error, we acquire 
\[
I=\log\Big(\frac{t}{2\pi}\Big)\frac{1}{2\pi i}\int_{1/2-\sigma-i\infty}^{1/2-\sigma+i\infty}\Gamma(s)X^sds+O(X^{1/2-\sigma}).
\]
In the usual way we may shift this contour to the far left encountering poles at $s=-n$, $n\in\mathbb{N}$ with residues $(-1)^n/n!$. Since $\sigma>1/2$ there is no contribution from the pole at zero. In this way we find that this integral is given by $e^{-1/X}-1$ and so 
\be
\begin{split}
I = & (e^{-1/X}-1)\log(t/2\pi)+ O(X^{1/2-\sigma}) \\
      = & -\frac{\log (t/2\pi)}{X}+O(X^{-2}\log t)+ O(X^{1/2-\sigma}).
\end{split}
\ee

Performing the differentiation in $J$ gives
\[
J=iX^{1/2-\sigma}\int_{t/2}^{3t/2} X^{i(y-t)}\Gamma(\tfrac{1}{2}-\sigma+i(y-t))\Big(\log X+\frac{\Gamma^\prime}{\Gamma}(\tfrac{1}{2}-\sigma+i(y-t)) \Big)S(y)dy.
\]
and then on substituting $y\mapsto y+t$ the result follows. 
\end{proof}

Combining the above two lemmas gives Proposition \ref{main prop}. Note that the integral $I$ above is trivially $\ll X^{1/2-\sigma}\log t$. Evaluating it explicitly is where we acquire some cancellations, however the problem is then reduced to finding good bounds on $S(t)$, of which we know very little.

\end{document}